\title{From one Reeb orbit to two}
\author{Daniel Cristofaro-Gardiner and Michael
  Hutchings}
\date{}
\newcommand{\mc}[1]{{\mathcal #1}}
\numberwithin{equation}{section}
\newtheorem{theorem}{Theorem}[section]
\newtheorem{proposition}[theorem]{Proposition}
\newtheorem{corollary}[theorem]{Corollary}
\newtheorem{lemma}[theorem]{Lemma}
\newtheorem{lemma-definition}[theorem]{Lemma-Definition}
\theoremstyle{definition}
\newtheorem{remark}[theorem]{Remark}
\newcommand{\eqdef}{\;{:=}\;}
\newcommand{\C}{{\mathbb C}}
\newcommand{\R}{{\mathbb R}}
\newcommand{\N}{{\mathbb N}}
\newcommand{\Z}{{\mathbb Z}}
\newcommand{\op}{\operatorname}
\newcommand{\Ker}{\op{Ker}}
\newcommand{\bpm}{\begin{pmatrix}}
\newcommand{\epm}{\end{pmatrix}}
\renewcommand{\epsilon}{\varepsilon}
\begin{document}

\setcounter{tocdepth}{2}

\maketitle

\begin{abstract}
  We show that every (possibly degenerate) contact form on a closed three-manifold has at least two embedded Reeb orbits. We also show that if there are only finitely many embedded Reeb orbits, then their symplectic actions are not all integer multiples of a single real number; and if there are exactly two embedded Reeb orbits, then the product of their symplectic actions is less than or equal to the contact volume of the manifold.  The proofs use a relation between the contact volume and the asymptotics of the amount of symplectic action needed to represent certain classes in embedded contact homology, recently proved by the authors and V.\ Ramos.

\end{abstract}

\section{Statement of results}

Let $Y$ be a closed oriented three-manifold.  Recall that a {\em contact form\/} on $Y$ is a 1-form $\lambda$ on $Y$ such that $\lambda\wedge d\lambda >0$.  A contact form $\lambda$ determines the {\em contact structure\/} $\xi \eqdef \Ker(\lambda)$, and the {\em Reeb vector field\/} $R$ characterized by $d \lambda(R,\cdot)=0$ and $\lambda(R)=1$.  A {\em Reeb orbit\/} is a closed orbit of the vector field $R$, i.e.\ a map $\gamma:\R/T\Z\to Y$ for some $T > 0$ such that $\gamma'(t)=R(\gamma(t))$, modulo reparametrization.  The Reeb orbit $\gamma$ is {\em nondegenerate\/} if the linearized Reeb flow along $\gamma$ does not have $1$ as an eigenvalue, and the contact form $\lambda$ is called nondegenerate if all Reeb orbits are nondegenerate.

The three-dimensional Weinstein conjecture, first proved in full generality by Taubes \cite{tw1}, asserts that any contact form on a closed three-manifold has at least one Reeb orbit.  It is interesting to try to improve the lower bound on the number of Reeb orbits.  In fact, it seems that the only known examples of contact forms on closed three-manifolds with only finitely many embedded Reeb orbits are certain contact forms on $S^3$ and lens spaces with exactly two embedded Reeb orbits, cf.\ \cite[Ex.\ 1.8]{bn}. It is shown in \cite[Thm.\ 1.3]{wh} that any nondegenerate contact form on a closed three-manifold $Y$ has at least two embedded Reeb orbits; and if $Y$ is not $S^3$ or a lens space, then there are at least three embedded Reeb orbits.  The main theorem of the present paper asserts that one can prove the existence of at least two embedded Reeb orbits without the nondegeneracy assumption:

\begin{theorem}
\label{thm:two}
Every (possibly degenerate) contact form on a closed three-manifold has at least two embedded Reeb orbits.
\end{theorem}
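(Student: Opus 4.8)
The plan is to argue by contradiction, using the asymptotic relation between symplectic action and contact volume in embedded contact homology $\op{ECH}$ proved by the authors and Gripp. By Taubes' theorem there is at least one embedded Reeb orbit, so suppose there were exactly one, call it $\gamma$, with action $T = \mc{A}(\gamma) > 0$. Then every Reeb orbit is an iterate $\gamma^j$, so every Reeb orbit set has action in $T\Z_{\geq 0}$; in other words the \emph{action spectrum} of $\lambda$ is the discrete set $T\Z_{\geq 0}$. Consequently every ECH spectral invariant $c_\sigma(\lambda)$ (the minimal action needed to represent a class $\sigma$ by a cycle) lies in $T\Z_{\geq 0}$, since it is realized by the action of an honest Reeb orbit set.

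The heart of the argument is that, when there is only one embedded orbit, the $U$ map \emph{strictly} decreases spectral action. Recall that the volume asymptotics are formulated using a sequence of nonzero classes $\sigma_1, \sigma_2, \dots$ with $U\sigma_{k+1} = \sigma_k$, for which
\[
\lim_{k \to \infty} \frac{c_{\sigma_k}(\lambda)^2}{k} = 2\,\op{vol}(Y,\lambda).
\]
Since $\gamma$ is the only Reeb orbit, its image is a single embedded circle; I would choose a base point $y_0 \in Y$ off this circle and compute $U = U_{y_0}$ by counting ECH index $2$ holomorphic curves in $\R \times Y$ through $(0,y_0)$ (the $U$ map is independent of this choice on homology). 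Any such curve $C$ satisfies $\int_C d\lambda = \mc{A}(\text{top}) - \mc{A}(\text{bottom}) \geq 0$, with equality only if $C$ is a union of trivial cylinders over Reeb orbits; but then $C$ would project into the circle $\gamma$, which does not contain $y_0$. Hence every contributing curve has positive $d\lambda$-area, so $U_{y_0}$ strictly lowers action, giving $c_{U\sigma}(\lambda) < c_\sigma(\lambda)$ whenever $U\sigma \neq 0$. Applied to the sequence above this yields $c_{\sigma_k}(\lambda) < c_{\sigma_{k+1}}(\lambda)$ for all $k$.

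Now I combine strict monotonicity with commensurability. Because the values $c_{\sigma_k}(\lambda)$ are strictly increasing and all lie in $T\Z_{\geq 0}$, consecutive values differ by at least $T$, so $c_{\sigma_k}(\lambda) \geq (k-1)T$ grows at least linearly in $k$. This contradicts the volume asymptotics, which force $c_{\sigma_k}(\lambda) \sim \sqrt{2\,\op{vol}(Y,\lambda)\,k}$ and hence sublinear growth. (The same reasoning gives the sharper statement that a contact form with only finitely many embedded Reeb orbits cannot have all of their actions be integer multiples of a single real number; Theorem~\ref{thm:two} is then the case of one orbit, whose action is trivially an integer multiple of itself.)

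The main obstacle is that this clean picture is literally valid only in the nondegenerate case, where $\op{ECH}$ and its $U$ map count honest holomorphic curves. For a possibly degenerate $\lambda$ I would define $c_\sigma(\lambda)$ as the $C^0$-limit $\lim_n c_\sigma(\lambda_n)$ over nondegenerate approximations $\lambda_n \to \lambda$; the volume asymptotics then persist by continuity, and a compactness argument shows each $c_\sigma(\lambda)$ is still the action of an honest Reeb orbit set, hence lies in $T\Z_{\geq 0}$. The delicate point is upgrading $c_{U\sigma}(\lambda_n) < c_\sigma(\lambda_n)$ to the limit, since naive passage only gives ``$\leq$''. I would argue by contradiction: if $c_{U\sigma}(\lambda) = c_\sigma(\lambda)$, then the index $2$ curves realizing the $U$ map for $\lambda_n$ pass through $(0,y_0)$ and have $d\lambda_n$-area tending to $0$; by Gromov--SFT compactness they converge to an area-zero holomorphic building in the symplectization of $\lambda$ through $(0,y_0)$, necessarily a union of trivial cylinders over Reeb orbits of $\lambda$, which forces $y_0$ to lie on such an orbit and contradicts the choice of $y_0$. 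Making this compactness and the accompanying action estimates precise, uniformly as $\lambda_n \to \lambda$, is where essentially all of the work lies.
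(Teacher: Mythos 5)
Your overall strategy is exactly the paper's: commensurability of the spectral numbers $c_\sigma$ with the single action $T$ (Lemma~\ref{lem:key}(a)), strict decrease of $c_\sigma$ under the $U$ map (Lemma~\ref{lem:key}(b)), and the resulting linear lower bound $c_{\sigma_k}\ge (k-1)T$ contradicting the sublinear growth forced by the volume asymptotics. The nondegenerate part of your argument is fine, and you correctly identify that the entire difficulty is in the degenerate case.

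However, the tool you propose for that difficulty would fail. You invoke Gromov--SFT compactness to extract, from index~$2$ curves through $(0,y_0)$ with $d\lambda_n$-area tending to $0$, a limiting holomorphic building in the symplectization of $\lambda$. SFT compactness requires both a genus bound on the curves (which ECH does not provide) and nondegeneracy of the limiting contact form (which is exactly what is being dropped); the paper points this out explicitly in Remark~\ref{rmk:whycurrents} and instead uses Taubes's compactness theorem for currents (Proposition~\ref{prop:taubes}), applied not to the whole symplectization but to the compact piece $[-1,1]\times(Y\setminus N)$, where $N$ is a union of small tubular neighborhoods of the finitely many orbits. A second, related gap: even granting a zero-area $\R$-invariant limit, your conclusion that it is ``a union of trivial cylinders over Reeb orbits of $\lambda$, forcing $y_0$ to lie on such an orbit'' is not automatic for a degenerate form --- an $\R$-invariant limit current is tangent to the Reeb flow, hence contains a Reeb \emph{trajectory} through $y_0$, which need not be closed. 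The paper converts this into a contradiction quantitatively: the neighborhoods $N_i$ are shrunk so that any Reeb trajectory meeting both $y_0$ and $\overline{N}$ has $\lambda$-integral at least $L+3$ (condition~(i)), which then violates the action bound $L+1$ on the generators of the representing cycle via the convergence of currents. Without this localized setup and action estimate, the limiting step of your argument does not close.
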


For example, Theorem~\ref{thm:two} has the following implication for
Hamiltonian dynamics.  Recall that if $Y$ is a hypersurface in a
symplectic manifold $(X,\omega)$, then the {\em characteristic
  foliation\/} on $Y$ is the rank one foliation $L_Y\eqdef
\op{Ker}(\omega|_{TY})$, and a {\em closed characteristic\/} in $Y$ is
an embedded loop in $Y$ tangent to $L_Y$.  If $Y$ is a regular level
set of a smooth function $H:X\to\R$, then closed characteristics on
$Y$ are the same as unparametrized embedded closed orbits of the
Hamiltonian vector field $X_H$ on $Y$.  Now consider $X=\R^4$ with the
standard symplectic form $\omega=\sum_{i=1}^2dx_i\,dy_i$.  If $Y$ is a
compact hypersurface in $\R^4$ which is star-shaped, meaning that it is tranverse to
the radial vector field, then
\[
\lambda = \frac{1}{2}\sum(x_idy_i-y_idx_i)
\]
restricts to a contact form on $Y$, and the unparametrized embedded Reeb orbits are the same
as the closed characteristics.  (The contact forms that arise this way correspond to the contact forms on $S^3$ for the tight contact structure.) Thus Theorem~\ref{thm:two} applied to $S^3$ implies
the following:
      
\begin{corollary}
\label{cor:dcm}
Every smooth compact star-shaped hypersurface in $\R^4$ has at least two
closed characteristics.
\end{corollary}

Previously, Hofer-Wysocki-Zehnder
showed in \cite[Thm.\ 1.1]{hwz2} that every strictly convex hypersurface
in $\R^4$ has either two or infinitely many closed characteristics,
and in \cite[Cor.\ 1.10]{hwz} that every nondegenerate contact form on
$S^3$ giving the tight contact structure has either two or infinitely
many embedded Reeb orbits, provided that all stable and unstable manifolds
of the hyperbolic periodic orbits intersect transversally.
In higher dimensions, Wang \cite{wang} has shown
that there are at least $\left \lfloor \frac{n+1}{2} \right \rfloor +
1$ closed characteristics on every compact strictly convex
hypersurface $\Sigma$ in $\R^{2n}$.  It has long been conjectured that
there are at least $n$ closed characteristics on every compact convex
hypersurface in $\R^{2n}$, cf.\ \cite[Conj.\ 1]{eh}. After the first version of this paper appeared, alternate proofs of Corollary~\ref{cor:dcm} were given in \cite{ghhm,ll}.

Similarly, applying Theorem~\ref{thm:two} to the unit cotangent bundle of $S^2$ recovers the result of Bangert-Long \cite{bl} that every (not necessarily reversible) Finsler metric on $S^2$ has at least two closed geodesics.

The method used to prove Theorem~\ref{thm:two} yields a slightly more
general result.  To state it, define the
{\em symplectic action\/} of a Reeb orbit $\gamma$ by
\[
\mc{A}(\gamma) \eqdef \int_\gamma\lambda.
\]
We then have:

\begin{theorem}
\label{thm:general}
Let $(Y,\lambda)$ be a closed contact three-manifold having only finitely many embedded Reeb orbits
$\gamma_1,\ldots,\gamma_m$.  Then their symplectic actions
$\mc{A}(\gamma_1),\ldots,\mc{A}(\gamma_m)$ are not all integer
multiples of a single real number.
\end{theorem}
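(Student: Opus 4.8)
The plan is to assume the conclusion fails and derive a contradiction from the ECH volume asymptotics. So suppose $(Y,\lambda)$ has only the embedded Reeb orbits $\gamma_1,\dots,\gamma_m$, and suppose there is some $r>0$ with $\mc{A}(\gamma_i)\in r\Z$ for all $i$. I will track the amount of symplectic action needed to represent the successive classes in the ``$U$-tower'' of embedded contact homology.

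First I would introduce the ECH spectral invariants $c_0\le c_1\le c_2\le\cdots$ attached to a sequence of nonzero classes $\sigma_0,\sigma_1,\dots$ satisfying $U\sigma_{k+1}=\sigma_k$; such an infinite tower exists because $ECH(Y,\lambda)$ is isomorphic to Seiberg--Witten Floer cohomology, and the nonvanishing of the latter is exactly what underlies Taubes's proof of the Weinstein conjecture. The key structural point is that each $c_k$ equals the symplectic action of an orbit set $\sum_i n_i\gamma_i$ with $n_i\in\Z_{\ge0}$. Under our standing assumption the set of all such actions is contained in $r\Z_{\ge0}$, which is closed and discrete. For nondegenerate $\lambda$ this membership $c_k\in r\Z$ is immediate from the definition of filtered ECH; in the possibly degenerate case I would define $c_k$ as the $C^0$-limit of $c_k(\lambda')$ over nondegenerate $\lambda'\to\lambda$ and use a compactness argument — the orbit sets realizing $c_k(\lambda')$ have action tending to $c_k(\lambda)<\infty$, so their underlying orbits subconverge to Reeb orbits of $\lambda$ with the same total action, forcing $c_k(\lambda)$ into the (closed) action set and hence into $r\Z$.

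Next I would invoke the asymptotic formula of the authors and Gripp, $\lim_{k\to\infty}c_k^2/k=2\int_Y\lambda\wedge d\lambda>0$. In particular $c_k\to\infty$, so the nondecreasing sequence $(c_k)$ is not eventually constant. Combined with $c_k\in r\Z$, this forces $c_{k+1}-c_k\ge r$ for infinitely many $k$, so that $\limsup_{k\to\infty}(c_{k+1}-c_k)\ge r>0$.

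The contradiction will come from the reverse estimate, which I expect to be the main obstacle: \emph{having only finitely many embedded Reeb orbits forces $\lim_{k\to\infty}(c_{k+1}-c_k)=0$}. This is precisely where finiteness enters, and it is consistent with the known examples — the irrational ellipsoid has finitely many orbits and shrinking gaps, whereas a resonant form such as the rational ellipsoid keeps the gaps bounded below, but only at the cost of admitting infinitely many embedded Reeb orbits. To prove this gap lemma I would use that the relation $U\sigma_{k+1}=\sigma_k$ is realized, for generic almost complex structure, by a (possibly broken) holomorphic current of ECH index $2$ through a fixed marked point, running from the generator of action $c_{k+1}$ to the generator of action $c_k$ and having symplectic area exactly $c_{k+1}-c_k$. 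Since there are only finitely many embedded orbits, the admissible orbit sets are the lattice points $\sum_i n_i\gamma_i$, and one controls these index-two currents using ECH compactness together with the index--action asymptotics of the finitely many orbits and their iterates; a gap bounded below along an infinite subsequence is then incompatible with finiteness of the orbit set (the degenerate case again following by $C^0$-approximation). Granting the gap lemma, it contradicts the previous paragraph directly, completing the proof.
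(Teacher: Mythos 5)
Your setup (the $U$-tower from Seiberg--Witten theory, the discreteness statement $c_k\in r\Z$ coming from finiteness of the orbit set, and the volume asymptotics $c_k^2/k\to 2\op{vol}(Y,\lambda)$) matches the paper's framework, and your sketch of the degenerate-case compactness argument for $c_k\in r\Z$ is essentially the paper's Lemma 3.1(a). But the endgame has a genuine gap. Your contradiction rests entirely on the ``gap lemma'' $\lim_{k\to\infty}(c_{k+1}-c_k)=0$, which you do not prove: the appeal to ``ECH compactness together with the index--action asymptotics'' is not an argument, and the assertion that ``a gap bounded below along an infinite subsequence is incompatible with finiteness of the orbit set'' is essentially the theorem you are trying to prove, so the reasoning is circular. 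Note also that the weaker conclusion you do extract from discreteness, $\limsup_k(c_{k+1}-c_k)\ge r$, is perfectly consistent with $c_k\sim\sqrt{2\op{vol}\cdot k}$: the asymptotics only force the indices where a jump of size $\ge r$ occurs to have density $O(1/\sqrt{K})$ among $k\le K$, not to be finite in number. So without the gap lemma there is no contradiction, and the gap lemma itself does not follow from the volume asymptotics (even the much later subleading-asymptotics results $c_k=\sqrt{2\op{vol}\cdot k}+o(\sqrt{k})$ would not give it, since a bounded oscillating error term is compatible with gaps that do not tend to zero). A further inaccuracy: the index-$2$ curve realizing $U\sigma_{k+1}=\sigma_k$ runs between generators appearing in chain-level representatives, so its $d\lambda$-area is the difference of the actions of its asymptotic limits, not $c_{k+1}-c_k$.

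The fix is to aim lower. What the curve-through-a-point argument actually yields (this is the paper's Lemma 3.1(b), proved via Taubes's compactness for currents to get a $\delta>0$ uniform in the nondegenerate approximation) is strict monotonicity: $c_{U\sigma}(Y,\lambda)<c_\sigma(Y,\lambda)$ whenever $U\sigma\neq 0$. Combined with $c_k\in r\Z_{\ge 0}$, strictness upgrades every gap to $c_{k+1}-c_k\ge r$, hence $c_k\ge c_1+(k-1)r$ grows linearly in $k$. This already contradicts the \emph{weak} consequence $c_k/k\to 0$ of the volume asymptotics (since $c_k\sim\sqrt{2\op{vol}\cdot k}$ is sublinear), with no need for any statement about gaps tending to zero.
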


\begin{remark}
  If $\lambda$ has infinitely many embedded Reeb orbits, then their
  symplectic actions can all be integer multiples of a single real
  number, for example in a prequantization space, or in an ellipsoid
  $(\frac{|z_1|^2}{a_1}+\frac{|z_2|^2}{a_2}=1)\subset\C^2$ with $a_1/a_2$ rational.
  Theorem~\ref{thm:general} (and its proof) does extend to contact
  forms with infinitely many embedded Reeb orbits if they are isolated
  in the free loop space.
\end{remark}

To state one more result, if $\lambda$ is a contact form on a closed
oriented three-manifold $Y$, define the {\em volume\/} of
$(Y,\lambda)$ by
\begin{equation}
\label{eqn:contactvolume}
\op{vol}(Y,\lambda) \eqdef \int_{Y}\lambda \wedge d\lambda.
\end{equation}
One can ask whether there exists a Reeb orbit with an upper bound on
its symplectic action in terms of the volume of $(Y,\lambda)$, for example with symplectic action less than or equal to the square root of the volume.  One
might also expect that in most cases there are at least three embedded
Reeb orbits.  The following theorem asserts that at least one of these two statements always holds:

\begin{theorem}
\label{thm:three}
Let $(Y,\lambda)$ be a closed contact three-manifold.  Then either:
\begin{itemize}
\item
$\lambda$ has at least three embedded Reeb orbits, or
\item
$\lambda$ has exactly two embedded Reeb orbits, and their symplectic
actions $T,T'$ satisfy $TT'\le\op{vol}(Y,\lambda)$.
\end{itemize}
\end{theorem}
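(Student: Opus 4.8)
The plan is to combine the volume identity with a lattice-point count for the ECH generators. By Theorem~\ref{thm:two} the form $\lambda$ has at least two embedded Reeb orbits, so it suffices to treat the case where there are exactly two, say $\gamma_1,\gamma_2$ with actions $T=\mc{A}(\gamma_1)$ and $T'=\mc{A}(\gamma_2)$, and to prove $TT'\le\op{vol}(Y,\lambda)$. The input from the authors and Gripp is the volume identity: there is a distinguished sequence of nonzero classes $\sigma_0,\sigma_1,\ldots$ in the ECH of $(Y,\lambda)$, related by the $U$-map via $U\sigma_{k}=\sigma_{k-1}$, whose ECH spectral invariants $c_k\eqdef c_{\sigma_k}(Y,\lambda)$ satisfy $\lim_{k\to\infty}c_k^2/k=2\op{vol}(Y,\lambda)$. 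Since the $c_k$ are nondecreasing in $k$, the theorem reduces to the lower bound $\liminf_{k\to\infty}c_k^2/k\ge 2TT'$.

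Assume first that $\lambda$ is nondegenerate, so that ECH is the homology of an honest chain complex. Because $\gamma_1,\gamma_2$ are the only embedded Reeb orbits, every ECH generator is an orbit set of the form $\gamma_1^a\gamma_2^b$ with $a,b\ge 0$ (with the exponent of any hyperbolic orbit restricted to $\{0,1\}$), and its symplectic action is $aT+bT'$. The $U$-tower now produces many distinct low-action generators: for each $j$ the class $\sigma_j$ is nonzero and, by definition of the spectral invariant, is represented by a cycle all of whose generators have action at most $c_j\le c_k$. Since the classes $\sigma_0,\ldots,\sigma_k$ lie in the distinct gradings $I(\sigma_0),I(\sigma_0)+2,\ldots,I(\sigma_0)+2k$, these cycles involve at least $k+1$ \emph{distinct} generators, each of action at most $c_k$. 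On the other hand, a generator $\gamma_1^a\gamma_2^b$ is determined by the pair $(a,b)$, so the number of generators of action at most $L$ is at most $\#\{(a,b)\in\Z_{\ge0}^2 : aT+bT'\le L\}=\frac{L^2}{2TT'}+O(L)$, the leading term being the area of the triangle with legs $L/T$ and $L/T'$. Comparing the two counts gives $k+1\le \frac{c_k^2}{2TT'}+O(c_k)$, and since the volume identity forces $c_k=O(\sqrt{k})$, this yields $\liminf_k c_k^2/k\ge 2TT'$. Together with the volume identity this proves $\op{vol}(Y,\lambda)\ge TT'$ in the nondegenerate case.

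The main obstacle is the degenerate case, where there is no ECH chain complex and no grading to certify distinctness of generators. Here I would argue by approximation: choose nondegenerate contact forms $\lambda_n\to\lambda$ in $C^\infty$, so that $c_k(Y,\lambda_n)\to c_k(Y,\lambda)$ by $C^0$-continuity of the spectral invariants. Each $\lambda_n$-generator of bounded action is an orbit set whose underlying orbits converge, as $n\to\infty$, to Reeb orbits of $\lambda$, hence to iterates of $\gamma_1$ and $\gamma_2$; in particular every limiting action value lies in the discrete set $S=\{aT+bT':a,b\ge0\}$, which still satisfies $\#(S\cap[0,L])\le\frac{L^2}{2TT'}+O(L)$. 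The delicate point is that a degenerate orbit, or a degenerate iterate $\gamma_i^m$, may bifurcate into several embedded $\lambda_n$-orbits, so that distinct $\lambda_n$-generators can converge to the same orbit set $\gamma_1^a\gamma_2^b$; one must check that these extra generators do not inflate the count beyond $\frac{L^2}{2TT'}+O(L)$. I expect this to be the crux of the argument: the bifurcating orbits have actions clustering near the multiples $\{mT\}$ and $\{mT'\}$, i.e.\ near the two axes, so their contribution should be of lower order, and a careful bookkeeping in the limit $n\to\infty$ should recover $\liminf_k c_k^2/k\ge 2TT'$ and hence $TT'\le\op{vol}(Y,\lambda)$. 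Finally, the ellipsoids with irrational $a_1/a_2$, for which $\op{vol}(Y,\lambda)=TT'$, show that the inequality is sharp and confirm that the constant in the volume identity has been used correctly.
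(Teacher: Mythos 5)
Your overall strategy --- combining the volume asymptotics $\lim_{k}c_{\sigma_k}^2/k=2\op{vol}(Y,\lambda)$ with a lattice-point count giving $\liminf_k c_{\sigma_k}^2/k\ge 2TT'$ --- is exactly the paper's, and your nondegenerate argument is a correct minor variant: you certify that the $U$-tower produces at least $k+1$ distinct generators of action $\le c_k$ via the grading, where the paper instead certifies $k+1$ distinct \emph{values} $c_{\sigma_0}<\cdots<c_{\sigma_k}$ in the set $S=\{aT+bT'\}$. But the degenerate case, which you explicitly leave open, is the actual content of the theorem, and the route you sketch does not close the gap. The problem is that your count is a count of chain-complex \emph{generators} of the perturbed forms $\lambda_n$, and bifurcation destroys any uniform bound on that count: if some iterate $\gamma_1^m$ is degenerate it can split into many embedded $\lambda_n$-orbits of action near $mT$, and orbit sets built from these give far more than $L^2/(2TT')+O(L)$ generators of action $\le L$, with no $n$-independent control. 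Their actions do cluster near $S$, but to exploit that you must switch to counting the spectral values $c_{\sigma_0}(Y,\lambda),\ldots,c_{\sigma_k}(Y,\lambda)$ of the degenerate form itself, and then you need two facts your proposal does not supply: (a) each $c_{\sigma_j}(Y,\lambda)$ actually lies in $S$ (not merely near it after perturbation), and (b) these values are pairwise distinct, i.e.\ $c_{U\sigma}(Y,\lambda)<c_{\sigma}(Y,\lambda)$ strictly whenever $U\sigma\ne 0$. Without (b) all $k+1$ classes could share a single spectral value and the lattice count gives nothing.

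These two facts are precisely Lemma~\ref{lem:key}, and (b) is the real work: strictness survives the degenerate limit only because every $J$-holomorphic curve counted by the $U$-map passes through a base point $z$ away from the finitely many Reeb orbits, and a compactness argument for currents (Proposition~\ref{prop:taubes}) shows such curves carry a uniformly positive amount of $d\lambda$-energy $\delta$, independent of the perturbation $f_n$; hence $c_{U\sigma}\le c_{\sigma}-\delta$ in the limit. Your heuristic that the bifurcating orbits ``contribute lower order'' does not substitute for this: the danger is not the size of their contribution to a count but the possible collapse of the strict inequalities $c_{\sigma_j}<c_{\sigma_{j+1}}$ once the chain complex and its grading are gone. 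So the proposal is correct in outline and in the nondegenerate case, but the degenerate case requires the key lemma, which is missing.
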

     
\section{Embedded contact homology and volume}

To prepare for the proofs of Theorem~\ref{thm:two}, \ref{thm:general},
and \ref{thm:three}, we need to recall some notions from embedded
contact homology (ECH).  For more about ECH, see \cite{bn} and the
references therein.

\subsection{Definition of embedded contact homology}
\label{sec:defech}

If $\lambda$ is a nondegenerate contact form on a closed three-manifold $Y$, then for each $\Gamma \in H_1(Y)$ the
{\em embedded contact homology\/} with $\Z/2$ coefficients, which we
denote by $ECH_{*}(Y,\lambda,\Gamma)$, is defined.  (ECH can actually
be defined over $\Z$, see \cite{obg2}, but $\Z/2$ coefficients are
sufficient for the applications in this paper).  This is the homology
of a chain complex $ECC(Y,\lambda,\Gamma,J)$ generated by finite sets
$\alpha=\lbrace(\alpha_i,m_i)\rbrace$ such that the $\alpha_i$ are distinct embedded Reeb orbits, $m_i=1$ when $\alpha_i$ is hyperbolic, and
\[
\sum_i m_i[\alpha_i]=\Gamma \in H_1(Y).
\]
Here a Reeb orbit $\gamma$ is called {\em hyperbolic\/} if the linearized
Reeb flow around $\gamma$ has real eigenvalues.
The $J$ that enters into the chain complex is an
$\R$-invariant almost complex structure on $\R \times Y$ that sends
the two-plane field $\xi=\Ker(\lambda)$ to itself, rotating it positively with
respect to $d\lambda$, and satisfies $J(\partial_s)=R$, where $s$
denotes the $\R$ coordinate on $\R\times Y$.  The chain complex
differential $\partial$ counts certain mostly embedded $J$-holomorphic
curves in $\R \times Y$. Specifically, if $\alpha$ and $\beta$ are two
chain complex generators, then the differential coefficient
$\langle \partial \alpha, \beta \rangle\in\Z/2$ is a count of
$J$-holomorphic curves in $\R \times Y$, modulo translation of the
$\R$ coordinate, that are asymptotic as currents to $\R \times \alpha$
as $s \to \infty$ and to $\R \times \beta$ as $s \to -\infty$.  The
curves are required to have {\em ECH index\/} $1$.  The ECH index is a
certain function of the relative homology class of the curve,
explained e.g.\ in \cite{ir}; we do not need to recall the definition
here.  If $J$ is generic, then $\partial$ is well-defined and
$\partial^2=0$, as shown in \cite{obg1,obg2}.

The ECH index induces a relative $\Z/d$-grading on
$ECH_{*}(Y,\lambda,\Gamma)$, where $d$ denotes the divisibility of
$c_1(\xi)+2\op{PD}(\Gamma)$ in $H^2(Y;\Z)$ mod torsion, see
\cite[\S2.8]{ir}.  Here $\op{PD}(\Gamma)$ denotes the Poincare dual
of $\Gamma$.

\subsection{The isomorphism with Seiberg-Witten Floer cohomology}

Although a priori the homology of the chain complex
$ECC(Y,\lambda,\Gamma,J)$ might depend on $J$, in fact it does not.
This follows from a theorem of Taubes \cite{e1} asserting
that when $Y$ is connected, there is a canonical isomorphism between
embedded contact homology and a version of Seiberg-Witten Floer
cohomology as defined by Kronheimer-Mrowka \cite{km}.  The precise statement is that there is a canonical
isomorphism of relatively graded $\Z/2$-modules
\begin{equation}
\label{eqn:echsw}
ECH_{*}(Y,\lambda,\Gamma) \simeq
\widehat{HM}^{-*}(Y,\mathfrak{s}_{\xi}+PD(\Gamma)),
\end{equation}
where $\mathfrak{s}_{\xi}$ is the spin-c structure determined by the
oriented two-plane field $\xi$, see e.g.\ \cite[Lem.\ 28.1.1]{km}.
(The isomorphism \eqref{eqn:echsw} holds using $\Z$ or $\Z/2$ coefficients.)  In particular, there is a
well-defined relatively graded $\Z / 2$-module $ECH(Y,\xi,\Gamma)$.
By summing over all $\Gamma\in H_1(Y)$, one also obtains a
well-defined relatively graded $\Z /2$-module $ECH(Y,\xi)$.

\subsection{Filtered ECH}
\label{sec:FECH}

If $\alpha=\{(\alpha_i,m_i)\}$ is a generator of the ECH chain complex, define the {\em symplectic action\/} of $\alpha$ by

\[
\mc{A}(\alpha) \eqdef \sum_im_i\mc{A}(\alpha_i) = \sum_im_i \int_{\alpha_i}\lambda.
\]
It follows from the conditions on $J$ that the ECH differential
decreases the symplectic action.  Hence, for any real number $L$, one
can define the {\em filtered ECH\/}, denoted by $ECH^{L}(Y,\lambda,\Gamma)$, to be
the homology of the subcomplex of $ECC$ spanned by
generators with action strictly less than $L$.

It is shown in \cite[Thm.\ 1.3]{cc2} that $ECH^{L}(Y,\lambda,\Gamma)$
does not depend on the choice of generic $J$ required to define the
chain complex differential.  On the other hand,
$ECH^{L}(Y,\lambda,\Gamma)$, for fixed $Y$, $\Gamma$ and $L$, does depend
on the contact form $\lambda$ and not just on the contact structure
$\xi$.

As with the usual ECH, one can take the direct sum of the filtered ECH for all $\Gamma\in H_1(Y)$
to obtain a relatively graded $\Z / 2$ module $ECH^{L}(Y,\lambda)$.

\subsection{The U map}

If $Y$ is connected, there is a degree $-2$ map
\begin{equation}
\label{eqn:Umap}
U: ECH(Y,\lambda,\Gamma) \to ECH(Y,\lambda,\Gamma).
\end{equation}
It is induced by a chain map $U_z$ which is defined similarly to the
differential $\partial$, but instead of counting ECH index $1$ curves
modulo translation, it counts $J$-holomorphic curves of ECH index $2$
passing through $(0,z) \in \R \times Y$, where $z$ is a base point in
$Y$ which is not contained in any Reeb orbit.  The connectedness of $Y$ implies that the induced map \eqref{eqn:Umap}
does not depend on $z$.  (When $Y$ is disconnected there is one $U$
map for each component.)  For details see \cite[\S2.5]{wh} or \cite[\S3.8]{bn}.

There is an analogous $U$ map on Seiberg-Witten Floer cohomology, and
it is shown in \cite[Thm.\ 1.1]{e5} that this agrees with the $U$ map
on ECH under the isomorphism \eqref{eqn:echsw}.
 
\subsection{Minimum symplectic action needed to represent a class}

Let $0\neq \sigma \in ECH(Y,\xi)$.  We now recall from \cite{qech} the
definition of a real number $c_{\sigma}(Y,\lambda)$, which roughly
speaking is the minimum symplectic action needed to represent the
class $\sigma$.

If $\lambda$ is nondegenerate, then $c_{\sigma}(Y,\lambda)$ is the
infimum over $L$ such that $\sigma$ is in the image of the
inclusion-induced map $ECH^L(Y,\lambda)\to ECH(Y,\xi)$.  Note that for
any $J$ as needed to define the chain complex $ECC(Y,\lambda,J)$,
there exists a cycle $\theta$ in the chain complex representing the
class $\sigma$, such that every chain complex generator $\alpha$ that
appears in $\theta$ satisfies $\mc{A}(\alpha) \le
c_\sigma(Y,\lambda)$, and $c_\sigma(Y,\lambda)$ is the smallest number
with this property.  We call a cycle $\theta$ as above an {\em
  action-minimizing representative\/} of $\sigma$.

If $\lambda$ is degenerate, one defines
\begin{equation}
\label{eqn:degenerate}
c_\sigma(Y,\lambda)=\lim_{n\to\infty}c_\sigma(Y,f_n\lambda),
\end{equation}
where $f_n:Y\to\R$ are positive smooth functions such that the contact form $f_n\lambda$ is
nondegenerate and $\lim_{n\to\infty}f_n=1$ in the $C^0$ topology.

The numbers $c_\sigma(Y,\lambda)$ then satisfy the following axioms:
\begin{description}
\item{(Monotonicity)} If $f:Y\to\R$ is a smooth function with $f>1$,
    then $c_\sigma(Y,\lambda)\le c_\sigma(Y,f\lambda)$.
\item{(Scaling)} If $\kappa>0$ is a constant then
  $c_\sigma(Y,\kappa\lambda)=\kappa c_\sigma(Y,\lambda)$.
\item{(Continuity)} If $f_n:Y\to\R$ are positive smooth functions with
  $\lim_{n\to\infty}f_n=1$ in the $C^0$ topology, then
  $\lim_{n\to\infty}c_\sigma(Y,f_n\lambda)=c_\sigma(Y,\lambda)$.
\end{description}
To see that \eqref{eqn:degenerate} is well-defined and to prove the
above axioms, one can first show that the Monotonicity and Scaling axioms
hold for nondegenerate contact forms, see \cite[\S4]{qech}.  It then
follows from this that the definition \eqref{eqn:degenerate} does not depend on
the sequence $\{f_n\}$, and that the Monotonicity, Scaling,
and Continuity axioms hold without any nondegeneracy assumption.

\subsection{Asymptotics and volume}
\label{sec:volume}

In \cite{vc}, the following result was established relating the asymptotics of the numbers $c_{\sigma}(Y,\lambda)$
 to the contact volume \eqref{eqn:contactvolume}.
  If $\Gamma \in H_1(Y)$ is such that $c_1(\xi)+2PD(\Gamma)
\in H^2(Y;\Z)$ is torsion, then we know from \S\ref{sec:defech} that
$ECC(Y,\xi,\Gamma)$ has a relative $\Z$-grading.  Choose any
normalization of this to an absolute $\Z$-grading, and denote the
grading of a generator $x$ by $I(x) \in \Z$.  We then have:

\begin{theorem}
\label{thm:strongvc}
\cite[Thm.\ 1.3]{vc}
Let $(Y,\lambda)$ be a closed connected contact three-manifold, let
$\Gamma \in H_1(Y)$, suppose that $c_1(\xi)+2PD(\Gamma)
\in H^2(Y,\Z)$ is torsion, and choose an absolute $\Z$-grading $I$ on $ECH(Y,\xi,\Gamma)$.  Let $\lbrace \sigma_k
\rbrace_{k=1,2,\ldots}$ be a sequence of nonzero homogeneous elements of
$ECH(Y,\xi,\Gamma)$ satisfying $\lim_{k \to
  \infty} I(\sigma_k) = \infty$.  Then
\begin{equation}
\label{eqn:volume}
\lim_{k \to \infty} \frac{c_{\sigma_k}(Y,\lambda)^2}{I(\sigma_k)}=\op{vol}(Y,\lambda).
\end{equation}
\end{theorem}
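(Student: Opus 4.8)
The plan is to prove \eqref{eqn:volume} by a two-sided estimate modeled on the Dirichlet--Neumann bracketing proof of the classical Weyl law, with exact symplectic cobordisms playing the role of domain inclusions and ellipsoids serving as the computable model pieces. I would first reduce to the case in which $\lambda$ is nondegenerate: since $\op{vol}(Y,\lambda)$ depends continuously on $\lambda$ and, by the definition \eqref{eqn:degenerate}, so does each $c_\sigma(Y,\lambda)$, the degenerate case should follow from the nondegenerate one. The subtlety is that \eqref{eqn:volume} already contains a limit in $k$, so interchanging it with the approximating limit $f_n\to 1$ requires a uniform estimate; I would aim to prove bounds $(1-\epsilon)\op{vol}(Y,\lambda)\le c_{\sigma_k}(Y,\lambda)^2/I(\sigma_k)\le (1+\epsilon)\op{vol}(Y,\lambda)$ valid for all large $k$ and simultaneously for every contact form sufficiently $C^0$-close to $\lambda$, and then run a diagonal argument.

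Next I would treat the model case of an ellipsoid, where the ECH generators, their symplectic actions, and their gradings are all combinatorially explicit. There a generator of grading $2k$ has action equal to the $(k+1)$-st smallest element $c_k$ of $\{ma+nb : m,n\ge 0\}$, and a lattice-point count gives $\#\{(m,n) : ma+nb<L\}\sim L^2/(2ab)$, hence $c_k\sim\sqrt{2abk}$. Since the contact volume of the boundary works out to $ab$, this gives $c_k^2/(2k)\to ab=\op{vol}$, confirming \eqref{eqn:volume} in this case and isolating the two facts that drive it: the number $N(L)$ of generators of action below $L$ grows like $L^2/(2\op{vol})$, and the grading of an action-minimizing generator is asymptotic to $2N$.

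For a general nondegenerate $(Y,\lambda)$ I would transfer these asymptotics by a bracketing argument. Using the additivity of the volume under cutting and the monotonicity of $c_\sigma$ under exact symplectic cobordisms, I would, for each $\epsilon>0$, construct cobordisms relating $(Y,\lambda)$ to disjoint unions of model pieces whose total volumes approximate $\op{vol}(Y,\lambda)$ from above and below to within $\epsilon$. The induced maps on filtered ECH, which respect both the action filtration and the grading, then sandwich the counting function $N(L)$ and the grading of $\sigma_k$ between those of the models, and the ellipsoid asymptotics from the previous step propagate to yield the uniform two-sided bound. Here the $U$ map is the essential tool for passing between different admissible sequences $\{\sigma_k\}$ and for identifying $I(\sigma_k)$ with twice the relevant count, since it lowers the grading by $2$ without increasing the action.

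The main obstacle I anticipate is the relationship between the ECH index and the symplectic action. The grading $I$ is assembled from Conley--Zehnder indices, relative first Chern numbers, and writhe and relative self-intersection terms, none of which is controlled by the action orbit-by-orbit; establishing that $I(\sigma_k)\sim 2N(c_{\sigma_k})$ with errors uniform in $k$ (and uniform over the $C^0$-approximating forms) is the crux. Equally delicate is verifying that the comparison cobordisms genuinely induce maps compatible with both the action filtration and the absolute grading, so that the bracketing inequalities hold exactly rather than only up to uncontrolled constants.
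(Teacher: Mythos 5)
First, note that the paper does not prove Theorem~\ref{thm:strongvc} at all: it is quoted from \cite{vc}, and the proof there is analytic rather than combinatorial. It passes through the isomorphism \eqref{eqn:echsw} to a filtered version of Seiberg--Witten Floer cohomology, estimates $c_\sigma$ in terms of the energy of solutions of the perturbed Seiberg--Witten equations, and controls the grading via Taubes-type spectral flow estimates for the Dirac operator, in which the volume $\op{vol}(Y,\lambda)$ appears as the leading coefficient of the spectral flow as the perturbation parameter $r\to\infty$. No decomposition of $Y$ into model pieces is involved; the relation between $c_{\sigma_k}$ and $I(\sigma_k)$ is established directly on $Y$ by these analytic estimates. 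Your reduction to the nondegenerate case is fine (the Monotonicity and Scaling axioms give the uniform two-sided bound you ask for), and your ellipsoid computation is correct and is indeed carried out in \cite[\S 3--4]{qech}; Remark~\ref{rem:wvc} records that for $S^3$ with the tight contact structure the theorem does follow from the ellipsoid case by exactly the cobordism--monotonicity argument you describe.

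The genuine gap is in the transfer from the ellipsoid to a general $(Y,\lambda)$. One of the two ``driving facts'' you isolate --- that the number $N(L)$ of ECH generators of action below $L$ grows like $L^2/(2\op{vol})$ --- is false in general. ECH generators are finite multisets of Reeb orbits with multiplicities, and for a contact form whose Reeb flow has many closed orbits (e.g.\ positive topological entropy) the count of generators of action below $L$ grows far faster than quadratically; the quadratic growth is an accident of the irrational ellipsoid, which has exactly two embedded orbits. Consequently the identification $I(\sigma_k)\sim 2N(c_{\sigma_k})$ cannot hold with $N$ defined as a generator count, and the quantity that must be controlled is the grading of an action-minimizing class directly. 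The bracketing step also does not exist in the generality you need: for star-shaped domains in $\R^4$ one has inclusions of domains and hence exact cobordisms to and from unions of ellipsoids whose volumes approximate the given one, but for a general closed contact three-manifold there is no known construction of exact symplectic cobordisms in both directions to disjoint unions of model pieces with total volume close to $\op{vol}(Y,\lambda)$; moreover the ECH cobordism maps of \cite{cc2} relate groups on different manifolds carrying only relative gradings and possibly different $\Gamma$'s, so ``sandwiching'' $I(\sigma_k)$ between model gradings is not meaningful without substantial additional input. These are exactly the points at which the Seiberg--Witten spectral flow argument of \cite{vc} replaces your proposed scheme.
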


To prove Theorem~\ref{thm:two} and Theorem~\ref{thm:general}, we just need the
following weaker result:

\begin{corollary}
\label{cor:wvc}
  Let $(Y,\lambda)$ be a closed connected contact three-manifold.  Then there
  exist nonzero classes $\{\sigma_k\}_{k\ge 1}$ in $ECH(Y,\xi)$ such that
\begin{equation}
\label{eqn:*1}
U\sigma_{k+1} = \sigma_k
\end{equation}
for all $k\ge 1$, and
\begin{equation}
\label{eqn:*2}
\lim_{k\to\infty}\frac{c_{\sigma_k}(Y,\lambda)}{k}=0.
\end{equation}.
\end{corollary}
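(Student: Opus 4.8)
The plan is to deduce \eqref{eqn:*2} directly from the volume asymptotics of Theorem~\ref{thm:strongvc}, once a $U$-tower satisfying \eqref{eqn:*1} has been produced, and to obtain that tower from the module structure of ECH over the $U$ map via the Seiberg-Witten isomorphism \eqref{eqn:echsw}. First I would fix a class $\Gamma\in H_1(Y)$ for which $c_1(\xi)+2PD(\Gamma)\in H^2(Y)$ is torsion. Such a $\Gamma$ exists: an oriented three-manifold is parallelizable, hence admits a spin structure and therefore a spin-c structure with $c_1=0$; and since $PD\colon H_1(Y)\to H^2(Y)$ is an isomorphism, the spin-c structures $\mathfrak{s}_\xi+PD(\Gamma)$ exhaust all spin-c structures as $\Gamma$ ranges over $H_1(Y)$, so one of them has torsion $c_1$. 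I then fix an absolute $\Z$-grading $I$ on $ECH(Y,\xi,\Gamma)$ as in \S\ref{sec:volume}.

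The key observation is that, because $U$ has degree $-2$ and preserves $\Gamma$, any sequence $\{\sigma_k\}$ of homogeneous elements of $ECH(Y,\xi,\Gamma)$ satisfying \eqref{eqn:*1} automatically has $I(\sigma_{k+1})=I(\sigma_k)+2$, hence $I(\sigma_k)=I(\sigma_1)+2(k-1)\to\infty$ linearly in $k$. Granting such a sequence of nonzero classes, Theorem~\ref{thm:strongvc} gives $c_{\sigma_k}(Y,\lambda)^2/I(\sigma_k)\to\op{vol}(Y,\lambda)$; since $I(\sigma_k)$ grows linearly, this forces $c_{\sigma_k}(Y,\lambda)=O(\sqrt{k})$ and therefore $c_{\sigma_k}(Y,\lambda)/k\to 0$, which is \eqref{eqn:*2}. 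Thus the whole problem reduces to constructing nonzero homogeneous classes $\sigma_k\in ECH(Y,\xi,\Gamma)$, lying in gradings tending to $+\infty$, with $U\sigma_{k+1}=\sigma_k$.

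To construct this tower I would argue on the Seiberg-Witten side. By \eqref{eqn:echsw} and \cite{e5}, which identifies the ECH and Seiberg-Witten $U$ maps, it is enough to produce the tower in $\widehat{HM}^{-*}(Y,\mathfrak{s}_\xi+PD(\Gamma))$. For this torsion spin-c structure, Kronheimer-Mrowka's computation \cite{km} shows that the relevant Floer group agrees, in all sufficiently high ECH-gradings, with the reducible (``bar'') version, which is nonzero in arbitrarily high gradings and on which $U$ acts surjectively. Hence there is a threshold $N$ with $ECH_*(Y,\xi,\Gamma)\neq 0$ for infinitely many $*\ge N$ and with $U\colon ECH_*(Y,\xi,\Gamma)\to ECH_{*-2}(Y,\xi,\Gamma)$ surjective for all $*\ge N$. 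I would then pick a nonzero homogeneous $\sigma_1$ with $I(\sigma_1)\ge N$ and use surjectivity of $U$ to choose $\sigma_{k+1}$ inductively with $U\sigma_{k+1}=\sigma_k$; each $\sigma_{k+1}$ is automatically nonzero, since $\sigma_k=U\sigma_{k+1}\neq 0$. This yields the required tower.

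The hard part is the structural input in the previous paragraph, namely that in high gradings the Seiberg-Witten Floer cohomology is nonzero and the $U$ map is surjective. This is exactly where the deep geometry enters, through Taubes's isomorphism \eqref{eqn:echsw} and Kronheimer-Mrowka's analysis of the reducibles for torsion spin-c structures; it cannot be seen from the formal axioms of $c_\sigma$ or from the volume asymptotics alone. By contrast, the choice of $\Gamma$, the linear growth of the gradings, and the deduction of \eqref{eqn:*2} from Theorem~\ref{thm:strongvc} are all formal.
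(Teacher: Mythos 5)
Your proposal is correct and follows essentially the same route as the paper: choose $\Gamma$ with $c_1(\xi)+2\op{PD}(\Gamma)$ torsion, produce the $U$-tower via the isomorphism \eqref{eqn:echsw} and the Kronheimer--Mrowka structure results for Seiberg--Witten Floer cohomology of a torsion spin-c structure, note that the degree $-2$ property of $U$ forces $I(\sigma_k)$ to grow linearly, and then deduce \eqref{eqn:*2} from the volume asymptotics \eqref{eqn:volume}. The only difference is that you unpack the content of the cited results of \cite{km} (agreement with the reducible part and surjectivity of $U$ in high gradings) where the paper simply quotes them.
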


\begin{proof}  We can always find a class $\Gamma\in H_1(Y)$ such
that $c_1(\xi)+2\op{PD}(\Gamma)\in H^2(Y;\Z)$ is torsion.  It follows
from the isomorphism \eqref{eqn:echsw} of $ECH(Y,\xi,\Gamma)$ with
Seiberg-Witten Floer cohomology, together with known properties of the
latter \cite[Lem. 33.3.9, Cor.\ 35.1.4]{km}, that there exists a
sequence $\{\sigma_k\}_{k\ge 1}$ of nonzero homogeneous elements of
$ECH(Y,\xi,\Gamma)$ satisfying
\eqref{eqn:*1}. Since the $U$ map has degree $-2$, we have
$I(\sigma_{k+1})=I(\sigma_k)+2$.  Hence, Theorem~\ref{thm:strongvc} applies
to give \eqref{eqn:volume}, which then implies \eqref{eqn:*2}.
\end{proof}

\begin{remark}
\label{rem:wvc}
The analysis in \cite{vc} is not required for Corollary 1.2, because it was already shown in \cite{qech} that Theorem~\ref{thm:strongvc}  holds for any contact form on $S^3$ giving the tight contact structure.  In particular, it follows from \cite[Rmk. 3.3, Prop. 4.5]{qech} that Theorem~\ref{thm:strongvc} holds for the boundary of an ellipsoid in $\R^4$, and it then follows from \cite[Prop. 8.6(b)]{qech} that Theorem~\ref{thm:strongvc} holds for any other contact form giving the same contact structure.
\end{remark}

\section{The key lemma}

The key to the proofs of Theorems~\ref{thm:two}, \ref{thm:general},
and \ref{thm:three} is the following:

\begin{lemma}
\label{lem:key}
Let $Y$ be a closed connected three-manifold and let $\lambda$ be a
(possibly degenerate) contact form on $Y$ with kernel $\xi$.  Assume
that $\lambda$ has only finitely many embedded Reeb orbits
$\gamma_1,\ldots,\gamma_m$.  Then:
\begin{description}
\item{(a)} If $0\neq \sigma\in ECH(Y,\xi)$, then
  $c_\sigma(Y,\lambda)$ is a nonnegative integer linear combination of
  $\mc{A}(\gamma_1),\ldots,\mc{A}(\gamma_m)$.
\item{(b)} If $\sigma\in ECH(Y,\xi)$ and $U\sigma\neq 0$, then
  $c_{U\sigma}(Y,\lambda) < c_\sigma(Y,\lambda)$.
\end{description}
\end{lemma}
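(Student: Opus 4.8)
The plan is to reduce everything to the nondegenerate case through the approximation \eqref{eqn:degenerate}, but choosing the approximating sequence with care. Since nondegenerate contact forms are $C^\infty$-generic, I would take smooth functions $f_n\to 1$ in $C^\infty$ (not merely $C^0$) with each $f_n\lambda$ nondegenerate; because \eqref{eqn:degenerate} is independent of the sequence this costs nothing, and it guarantees that the Reeb vector fields $R_{f_n\lambda}$, together with a choice of adapted almost complex structures $J_n$, converge to the corresponding data for $\lambda$. Two elementary facts then hold uniformly in $n$: first, for a nondegenerate form, $c_\tau$ is realized as $\mc{A}(\alpha)$ for an honest chain complex generator $\alpha$ (filtered ECH jumps only at generator actions, of which there are finitely many below any bound); second, there is a period $\eps_0>0$ below which $f_n\lambda$ has no embedded Reeb orbit, since a closed orbit of the uniformly $C^1$-bounded nonvanishing field $R_{f_n\lambda}$ cannot have arbitrarily small length.

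For part (a), write $c_\sigma(Y,f_n\lambda)=\mc{A}(\alpha_n)$ with $\alpha_n=\{(\alpha_{n,i},m_{n,i})\}$ a generator of the $f_n\lambda$ complex; by \eqref{eqn:degenerate} we have $\mc{A}(\alpha_n)\to c_\sigma(Y,\lambda)$. The bound $\mc{A}(\alpha_n)\le c_\sigma(Y,\lambda)+1$ together with the uniform period bound $\eps_0$ caps the total multiplicity $\sum_i m_{n,i}$, so after passing to a subsequence the combinatorial type is constant. Each embedded orbit $\alpha_{n,i}$ has bounded period, so by Arzel\`a--Ascoli and ODE regularity it converges to a closed orbit of $R_\lambda$; as $\lambda$ has only the embedded orbits $\gamma_1,\dots,\gamma_m$, this limit is an iterate $\gamma_{j(i)}^{d_i}$ of action $d_i\mc{A}(\gamma_{j(i)})$. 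Hence $\mc{A}(\alpha_n)\to\sum_i m_i d_i\,\mc{A}(\gamma_{j(i)})$, a nonnegative integer combination of $\mc{A}(\gamma_1),\dots,\mc{A}(\gamma_m)$, and this limit is exactly $c_\sigma(Y,\lambda)$.

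For part (b), the nonstrict bound $c_{U\sigma}(Y,\lambda)\le c_\sigma(Y,\lambda)$ follows by letting $n\to\infty$ in the nondegenerate inequality $c_{U\sigma}(Y,f_n\lambda)<c_\sigma(Y,f_n\lambda)$; the latter holds because applying the chain map $U_z$ to an action-minimizing representative of $\sigma$ strictly lowers the action of each generator, a $U$-curve passing through $(0,z)$ not being a union of trivial cylinders. To upgrade $\le$ to $<$, suppose $c_{U\sigma}(Y,\lambda)=c_\sigma(Y,\lambda)\fedqe c$ and argue by contradiction. Fix $z$ off the finitely many Reeb orbits of $\lambda$. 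For each $n$, apply $U_z$ to an action-minimizing representative $\theta_n$ of $\sigma$: the output represents $U\sigma$, so its top generator $\beta$ has action $\ge c_{U\sigma}(Y,f_n\lambda)$, and $\beta$ arises from some generator of $\theta_n$ (of action $\le c_\sigma(Y,f_n\lambda)$) through an ECH index $2$ curve $C_n$ pinned at a point $(0,z_n)$ with $z_n\to z$. The area of $C_n$ equals its action difference, which is at most $c_\sigma(Y,f_n\lambda)-c_{U\sigma}(Y,f_n\lambda)\to c-c=0$.

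Finally I would invoke Gromov--SFT compactness, using $J_n\to J$: a subsequence of the $C_n$ converges to a nonempty, point-constrained $J$-holomorphic building for $\lambda$ of total area $0$, necessarily a union of trivial cylinders $\R\times\gamma$ over Reeb orbits $\gamma$ of $\lambda$. But the limit still passes through $(0,z)$, forcing $z\in\gamma$, contradicting the choice of $z$. I expect this compactness step to be the main obstacle: one must extract the $C_n$ with area genuinely controlled by $c_\sigma-c_{U\sigma}$, arrange $z_n\to z$ avoiding the (bounded-action) Reeb orbits of each $f_n\lambda$ as well as those of $\lambda$, and verify that the point constraint survives in the limit so that a zero-area building is truly excluded. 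The accompanying algebra --- realizing the spectral numbers by generators and by curves --- is routine once this formalism and these compactness results are in hand.
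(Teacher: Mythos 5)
Your argument for part (a) is essentially the paper's: approximate $\lambda$ by nondegenerate forms $f_n\lambda$, realize $c_\sigma(Y,f_n\lambda)$ as the action of a chain-complex generator, and use an Arzel\`a--Ascoli argument on the bounded-period orbits to force the limit to be a nonnegative integer combination of the $\mc{A}(\gamma_j)$. That part is sound; the paper packages the same compactness into conditions imposed on the approximating sequence $f_n$, but the content is the same.

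The gap is exactly where you suspected it would be, in part (b), and it is not a technicality that can be deferred: the compactness theorem you invoke does not apply. SFT compactness \cite{behwz} requires a uniform genus bound on the curves $C_n$, which is not available for the ECH index $2$ curves counted by $U_z$ (the ECH index does not control the genus a priori), and it requires nondegeneracy of the limiting contact form $\lambda$, which is precisely what is not assumed here. Without a limiting building in the SFT sense you cannot conclude that a zero-area limit is ``a union of trivial cylinders over Reeb orbits of $\lambda$,'' and your final contradiction (that $z$ would lie on a Reeb orbit) evaporates. The paper circumvents this by working with currents rather than maps: it applies Taubes's Gromov compactness for currents \cite[Prop.\ 3.3]{t} only on the compact piece $[-1,1]\times(Y\setminus N)$, where $N$ is a union of small tubular neighborhoods of the $\gamma_j$, the functions $f_n$ are taken $\equiv 1$ outside $N$, and the required $\omega$-energy bound is obtained by Stokes from the action bound $L+1$ on the positive ends. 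The limit current $C_0$ passes through $(0,z)$; if it had zero $d\lambda$-area it would be $\R$-invariant and hence would contain a Reeb \emph{trajectory} (not a closed orbit --- there are none in $Y\setminus N$) through $z$ reaching $\partial\overline{N}$, and the neighborhoods are shrunk in advance so that any such trajectory has action at least $L+3$, contradicting the bound $L+1$ on the $f_{n_i}\lambda$-length of slices of $C_{n_i}$. This produces a uniform $\delta>0$ with $c_{U\sigma}(Y,f_n\lambda)\le c_\sigma(Y,f_n\lambda)-\delta$, whence the strict inequality survives the limit. To repair your proof you would need to replace the SFT limit by a compactness statement insensitive to genus and to degeneracy of $\lambda$, and replace the trivial-cylinder dichotomy by a quantitative exclusion such as the paper's condition on Reeb trajectories through $z$.
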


\begin{proof}
  Fix a nonzero class $\sigma\in ECH(Y,\xi)$ and write
  $L=c_\sigma(Y,\lambda)$.  Choose open tubular neighborhoods $N_i$ of
  the Reeb orbits $\gamma_i$ whose closures are disjoint, and let
  $N=\bigcup_{i=1}^m N_i$.  Fix a point $z\in Y \setminus
  \overline{N}$ for use in defining the $U$ map.  By shrinking the
  tubular neighborhoods $N_i$ if necessary, we may assume that:
\begin{description}
\item{(i)} If $\gamma$ is a Reeb trajectory intersecting both $z$ and
  $\overline{N}$ then $\int_\gamma\lambda\ge L+3$.
\end{description}
Next, choose a sequence of smooth functions $\{f_n:Y\to\R^{>0}\}$ such
that:
\begin{description}
\item{(ii)}
 $f_n|_{Y\setminus N}\equiv 1$,
\item{(iii)}
The contact form
  $f_n\lambda$ is nondegenerate,
\item{(iv)}
$\lim_{n\to\infty}f_n=1$ in the
  $C^1$ topology, and
\item{(v)}
Every Reeb orbit of $f_n\lambda$ with
  symplectic action less than $L+1$ is contained in some $N_i$, and
  has symplectic action within $1/n$ of an integer multiple of
  $\mc{A}(\gamma_i)$.
\end{description}
(The reason we can obtain condition (v) is that otherwise there would
be a sequence $f_n$ such that each $f_n\lambda$ has a Reeb orbit of
action less than $L+1$ not contained in $N$, or a Reeb orbit in $N_i$
of action $<L+1$ whose action is not within $\epsilon$ of an integer
multiple of $\mc{A}(\gamma_i)$ for some $n$-independent
$\epsilon>0$. Then a subsequence of these Reeb orbits would converge
to a Reeb orbit of $\lambda$ which could not be a multiple of one of
the Reeb orbits $\gamma_i$.)

It follows from conditions (iii) and (v) that $c_\sigma(Y,f_n\lambda)$
is within distance $m/n$ of an integer linear combination of
$\mc{A}(\gamma_1),\ldots,\mc{A}(\gamma_m)$.  Assertion (a) of the
lemma now follows from condition (iv) and the Continuity axiom for
$c_\sigma$.

To prove (b), continue to fix the above data, and assume that
$U\sigma\neq 0$.  For each $n$, choose a generic almost complex
structure $J_n$ on $\R\times Y$ as needed to define the filtered ECH
chain complex $ECC^{L+1}(Y,f_n\lambda,J_n)$ and the chain map $U_z$ on
it.  Specifically, we need $J_n$ to satisfy the genericity conditions
listed in the first paragraph of \cite[\S10]{obg2}, for
$J_n$-holomorphic curves counted by $\partial$ or $U_z$ whose positive
ends have total action less than $L+1$.  These conditions on $J_n$ can
all be achieved by perturbing $J_n$ near the Reeb orbits of action
less than $L+1$.  So by condition (v) above, we can arrange that the
almost complex structures $J_n$ agree with a fixed almost complex
structure $J_0$ on $\R\times (Y\setminus N)$.

We know from the proof of (a) that if $n$ is sufficiently large then
$c_\sigma(Y,f_n\lambda)<L+1$, so we can choose an action-minimizing
representative $\theta_n$ of $\sigma$ in $ECC^{L+1}(Y,f_n\lambda)$.

\medskip

{\em Claim.\/} There exists $\delta>0$ and a positive integer $n_0$
such that if $n\ge n_0$ and $C_n$ is a $J_n$-holomorphic curve counted
by $U_z\theta_n$, then $\int_{C_n}d(f_n\lambda)\ge \delta$.

\medskip

The Claim implies (b), because it implies that if $n\ge n_0$ then
$c_{U\sigma}(Y,f_n\lambda)\le c_\sigma(Y,f_n\lambda) - \delta$, and so
by the Continuity axiom $c_{U\sigma}(Y,\lambda)\le
c_\sigma(Y,\lambda)-\delta$.

\medskip

{\em Proof of Claim:\/} Recall that the conditions on $J_n$ imply that
if $C_n$ is any $J_n$-holomorphic curve, then $d(f_n\lambda)$ is
pointwise nonnegative on $C_n$, with equality only where the tangent
space to $C_n$ is the span of the $\R$ direction and the Reeb
direction (or where $C_n$ is singular, although below $C_n$ will be a curve counted by $U_z\theta_n$ and these do not have singularities).  In particular,
$\int_{C_n}d(f_n\lambda)\ge 0$.  Consequently, if the Claim is false,
then we can find an increasing sequence $\{n_i\}_{i\ge 1}$ of positive
integers, and for each $i$ a $J_{n_i}$-holomorphic curve $C_{n_i}$
counted by $U_z\theta_{n_i}$, such that
$\lim_{i\to\infty}\int_{C_{n_i}}d(f_{n_i}\lambda)=0$.

We now use the following proposition, which is a special case of a
result of Taubes \cite[Prop.\ 3.3]{t}:

\begin{proposition}
\label{prop:taubes}
Let $(X,\omega)$ be a compact symplectic $4$-manifold with boundary
with a compatible almost complex structure $J$.  Let $\{C_{i}\}_{i \in
  \N}$ be a sequence of compact $J$-holomorphic curves in $X$ with
boundary contained in $\partial X$, and suppose that there exists $E >
0$ such that $\int_{C_i} \omega < E$ for all $i$.  Then one can pass
to a subsequence such that:
\begin{description}
\item{(Convergence as currents)} The curves $\{C_{i}\}$ converge
  weakly as currents to a compact $J$-holomorphic curve $C_0$ with
  boundary in $\partial X$ such that $\int_{C_0} \omega\le E$, and
\item{(Pointwise convergence)}
\[
\lim_{i\to\infty} \left(\sup_{x\in C_{i^*}}\op{dist}(x,C_0) +
  \sup_{x\in C_0}\op{dist}(x,C_{i^*})\right) = 0.
\]
\end{description}
\end{proposition}

We apply the above proposition to the
intersections of the holomorphic curves $C_{n_i}$ with $X=[-1,1]\times
(Y\setminus N)$, with the symplectic form $\omega =
d(e^s\lambda)$.  To see why we have the necessary upper bound on
$\omega$ to apply the proposition, given $i$, choose $s_+\in[1,2]$ and
$s_-\in[-2,-1]$ such that $C_{n_i}$ is tranverse to $\{s_\pm\}\cap Y$.
 Then since $d(e^{s}f_{n_i}\lambda)$ and $d(f_{n_i}\lambda)$
are pointwise nonnegative on $C_{n_i}$, we have an upper bound
\[
\begin{split}
\int_{C_{n_i}\cap([-1,1]\times (Y\setminus N))}\omega & \le
\int_{C_{n_i}\cap ([s_-,s_+]\times Y)}d(e^sf_{n_i}\lambda)\\
& = e^{s_+}\int_{C_{n_i}\cap (\{s_+\}\times Y)}f_{n_i}\lambda -
e^{s_-}\int_{C_{n_i}\cap (\{s_-\}\times Y)}f_{n_i}\lambda \\
& < e^2(L+1).
\end{split}
\]
So we can pass to a subsequence such that $C_{n_i}\cap([-1,1]\times
(Y\setminus N))$ converges in the sense of
Proposition~\ref{prop:taubes} to a (possibly multiply covered)
$J_0$-holomorphic curve $C_0$ in $[-1,1]\times (Y\setminus N)$.  By
the ``pointwise convergence'' condition, the curve $C_0$ contains the
point $(0,z)$, since each $C_{n_i}$ does.

Since $C_0$ is $J_0$-holomorphic, it follows that $d\lambda$ is
pointwise nonnegative on $C_0$, with equality only where $C_0$ is
singular or the tangent space of $C_0$ is the span of the $\R$
direction and the Reeb direction.  In particular,
\begin{equation}
\label{eqn:strict}
\int_{C_0}d\lambda\ge 0.
\end{equation}

In fact, the inequality \eqref{eqn:strict} must be strict.  Otherwise
$C_0$, regarded as a current, is invariant under translation of the
$[-1,1]$ coordinate on $[-1,1]\times(Y\setminus N)$.  It follows that
$C_0\cap(\{0\}\times (Y\setminus N))$ is tangent to the Reeb vector
field for $\lambda$.  In particular, $C_0\cap(\{0\}\times (Y\setminus
N))$, regarded as a subset of $Y$, contains a Reeb trajectory for
$\lambda$ passing through $z$ with endpoints on
$\partial\overline{N}$.  So by (i) above,
\[
\int_{C_0\cap(\{0\}\times(Y\setminus N))}\lambda\ge L+3.
\]
By the convergence of currents above, it follows that
\begin{equation}
\label{eqn:cc}
\int_{C_{n_i}\cap(\{s\}\times (Y\setminus N))}f_{n_i}\lambda \ge L+2
\end{equation}
whenever $i$ is sufficiently large and $s\in[-1,1]$ is such that
$C_{n_i}$ is transverse to $\{s\}\times Y$.  When this transversality
holds, we orient $C_{n_i}\cap(\{s\}\times Y)$, regarded as a
submanifold, by the ``$\R$-direction first'' convention.  The
conditions on $J_{n_i}$ imply that $f_{n_i}\lambda$ is pointwise
nonnegative on this oriented one-manifold, so it follows from
\eqref{eqn:cc} that
\begin{equation}
\label{eqn:cc2}
\int_{C_{n_i}\cap(\{s\}\times Y)}f_{n_i}\lambda \ge L+2.
\end{equation}
But this is impossible, because the left hand side of \eqref{eqn:cc2}
must be less than or equal to the maximum symplectic action of a
generator in $\theta_{n_i}$, which is less than $L+1$.  This
contradiction proves that the inequality \eqref{eqn:strict} is strict.

Given this, let $\delta=\frac{1}{2}\int_{C_0}d\lambda>0$.  It follows
from the convergence of currents that if $i$ is sufficiently large
then
\[
\begin{split}
\int_{C_{n_i}}d(f_{n_i}\lambda) & \ge
\int_{C_{n_i}\cap([-1,1]\times (Y\setminus N))}d(f_{n_i}\lambda)\\
& = \int_{C_{n_i}\cap([-1,1]\times (Y\setminus N))}d\lambda\\
&\ge \int_{C_0}d\lambda\;-\delta\\
& = \delta.
\end{split}
\]
This contradicts our assumption that $\lim_{i\to
  \infty}\int_{C_{n_i}}d(f_{n_i}\lambda)=0$ and thus completes the proof of
the Claim, and with it Lemma~\ref{lem:key}.
\end{proof}

\begin{remark}
\label{rmk:whycurrents}
In the above argument we can not quote the SFT compactness theorem
from \cite{behwz}, because that result assumes both a genus bound
(which one does not have in ECH) as well as nondegeneracy of the
contact form.  This is why we use Taubes's approach via currents.
Although this is only applicable in four dimensions, if one has a
genus bound then one can cite \cite{fish} for similar arguments in
higher dimensions.
\end{remark}

\section{Proofs of theorems}

\begin{proof}[Proof of Theorem~\ref{thm:two}.] This follows from
  Theorem~\ref{thm:general}.
\end{proof}

\begin{proof}[Proof of Theorem~\ref{thm:general}.] Suppose that
  $\lambda$ has only finitely many embedded Reeb orbits
 and suppose that their symplectic
  actions are all integer
  multiples of a single real number $T>0$.  Let $\{\sigma_k\}_{k\ge
    1}$ be any sequence satisfying \eqref{eqn:*1}.  Then by
  Lemma~\ref{lem:key}, we have $c_{\sigma_k}(Y,\lambda)=n_kT$ where
  $\{n_k\}_{k\ge 1}$ is a strictly increasing sequence of nonnegative
  integers.  It follows that
\begin{equation}
\label{eqn:liminf}
\liminf_{k\to\infty}\frac{c_{\sigma_k}(Y,\lambda)}{k} \ge T,
\end{equation}
so that \eqref{eqn:*2} cannot hold. This contradicts Corollary~\ref{cor:wvc}. 
\end{proof}

\begin{proof}[Proof of Theorem~\ref{thm:three}.]
  Suppose there are fewer than three embedded Reeb orbits.  We know
  from Theorem~\ref{thm:two} that $Y$ is connected and there are exactly two embedded Reeb
  orbits; denote their symplectic actions by $T$ and $T'$.

Let $\{\sigma_k\}_{k\ge 1}$ be a sequence of homogeneous classes satisfying \eqref{eqn:*1}.  By Lemma~\ref{lem:key}, we have
  $c_{\sigma_k}(Y,\lambda) = n_kT+n_k'T'$ where $n_k$ and $n_k'$ are
  nonnegative integers such that $n_{k+1}T+n_{k+1}'T'>n_kT+n_k'T'$.
  It follows from this that
\begin{equation}
\label{eqn:short}
\lim_{k\to\infty}\frac{c_{\sigma_k}(Y,\lambda)^2}{k} \ge 2TT'.
\end{equation}
To see this, note that if we fix $k$ and write
$L=c_{\sigma_k}(Y,\lambda)=n_kT+n_{k'}T'$, then $k$ is less than or
equal to the number of pairs of nonnegative integers $(x,y)$ with
$xT+yT'\le L$, which is the number of lattice points in the triangle
enclosed by the line $Tx+T'y=L$ and the $x$ and $y$ axes, which is
$L^2/(2TT')+O(L)$, compare \cite[\S3.3]{qech}.  On the other hand, since
the $U$ map has degree $-2$, we have
\begin{equation}
\label{eqn:short2}
\lim_{k\to\infty}\frac{I(\sigma_k)}{k} = 2.
\end{equation}
Putting \eqref{eqn:short} and \eqref{eqn:short2} into
\eqref{eqn:volume} gives $\op{vol}(Y,\lambda)\ge TT'$.
\end{proof}

\paragraph{Acknowledgments} The first author was partially supported
by NSF grant DMS-0838703.  The second author was partially supported
by NSF grant DMS-0806037.

\end{document}